\documentclass[reqno]{amsart}
\usepackage[T1]{fontenc}
\usepackage[utf8x]{inputenc}
\usepackage[all]{xy}
\usepackage{amssymb}
\usepackage{fullpage}
\usepackage{hyperref}

\newcommand{\cE}{\mathcal{E}}
\newcommand{\cZ}{\mathcal{Z}}

\newcommand{\Z}{\mathbb{Z}}
\newcommand{\R}{\mathbb{R}}
\newcommand{\C}{\mathbb{C}}

\newcommand{\PGL}{\mathbf{PGL}}
\newcommand{\PSL}{\mathbf{PSL}}
\newcommand{\U}{\mathbf{U}}
\newcommand{\W}{\mathbf{W}}

\newcommand{\Id}{\mathrm{Id}}

\newcommand{\lmt}{\longmapsto}
\newcommand{\lra}{\longrightarrow}

\newcommand{\Ga}{\Gamma}
\newcommand{\ga}{\gamma}
\newcommand{\si}{\sigma}

\renewcommand{\rho}{\varrho}
\renewcommand{\phi}{\varphi}

\newcommand{\ov}[1]{\overline{#1}}

\newtheorem{theorem}{Theorem}[section]
\newtheorem{proposition}[theorem]{Proposition}
\newtheorem{lemma}[theorem]{Lemma}
\newtheorem{corollary}[theorem]{Corollary}

\theoremstyle{definition}
\newtheorem{definition}[theorem]{Definition}

\newtheorem{remark}[theorem]{Remark}

\numberwithin{equation}{section}

\begin{document}

\title[Parabolic vector bundles on Klein surfaces]{Parabolic vector bundles on Klein surfaces}

\author[I. Biswas]{Indranil Biswas}

\address{School of Mathematics, Tata Institute of Fundamental
Research, Homi Bhabha Road, Mumbai 400005, India}

\email{indranil@math.tifr.res.in}

\author[F. Schaffhauser]{Florent Schaffhauser}

\address{Universidad de Los Andes, Departamento de Matemáticas, Carrera 1 \#18A-12, 111 711 Bogot\'a, Colombia \& Universit\'e de Strasbourg, Institut de Recherche Mathématique Avancée, 7 rue Ren\'e Descartes, 67 000 Strasbourg, France.}

\email{schaffhauser@math.unistra.fr}

\subjclass[2010]{14H60, 30F35}

\keywords{Klein surface, real parabolic vector bundle, Fuchsian groups, orbifold fundamental group.}

\date{\today}

\begin{abstract}
Given a discrete subgroup $\Ga$ of finite co-volume of $\PGL(2,\R)$, we define and study 
parabolic vector bundles on the quotient $\Sigma$ of the (extended) hyperbolic plane by $\Ga$. 
If $\Ga$ contains an orientation-reversing isometry, then the above is equivalent to studying 
real and quaternionic parabolic vector bundles on the orientation cover
of degree two of $\Sigma$. We then 
prove that isomorphism classes of polystable real and quaternionic parabolic vector bundles are 
in a natural bijective correspondence with the equivalence classes of real and quaternionic unitary 
representations of $\Gamma$. Similar results are obtained for compact-type real parabolic 
vector bundles over Klein surfaces.
\end{abstract}

\maketitle

\section{Introduction}

The study of vector bundles over varieties defined over the field of real numbers was initiated
in the seminal work of Atiyah \cite{AtiyahKR}. The present work investigates parabolic vector bundles
over curves defined over $\mathbb R$.

Holomorphic vector bundles on compact Riemann surfaces is a rich topic that started with the 
papers of Weil and Atiyah (\cite{Weil_matrix_div}, \cite{Atiyah_connections}), who gave 
necessary and sufficient conditions for such bundles to arise from linear representations of the 
fundamental group of the Riemann surface over which
the bundle lives. The notion of \textit{stability}, which had been introduced by Mumford in 
order to construct moduli varieties of such bundles (\cite{Mumford_Proc,Mumford_GIT_first_ed}), 
was then used by Narasimhan and Seshadri to prove that vector bundles arising from irreducible 
\textit{unitary} representations were exactly the stable vector bundles of degree zero 
(\cite{NS}).
Seshadri later introduced the notion of parabolic vector bundles (\cite{Se}), and
then proved with Mehta an analog of the Narasimhan-Seshadri theorem for unitary representations
of the fundamental group of a punctured Riemann surface
\cite{MS}. More precisely, they proved that,
given a finite subset $S$ of a compact Riemann surface $X$, irreducible representations of
$\pi_1(X\setminus S)$ in $\U(r)$ correspond bijectively to the stable parabolic vector bundles on
$X$ of rank $r$ and parabolic degree zero with parabolic structure over the points of $S$.

In this paper, we call \textit{Klein surface} a Riemann surface equipped with an anti-holomorphic involution. Just 
as compact Riemann surfaces correspond to smooth complex projective curves, compact Klein surfaces correspond to 
smooth complex projective curves defined over the field of real numbers. Let then $(X,\si)$ be a compact connected 
Klein surface ($\si$ being the given anti-holomorphic involution of the Riemann surface $X$). A real vector bundle 
on $(X,\sigma)$ is a holomorphic vector bundle on $X$ equipped with an anti-holomorphic lift $\tau$ of $\sigma$ 
such that $\tau^2=Id$, and a quaternionic vector bundle on $(X,\sigma)$ is a holomorphic vector bundle on $X$ 
equipped with an anti-holomorphic lift $\tau$ of $\sigma$ such that $\tau^2=-Id$. Real and quaternionic vector 
bundles on Klein surfaces have been intensively investigated (\cite{KW,BHH,Sch_GD}) and our aim here is to study 
the parabolic analog of them. We note that real parabolic vector bundles and linear connections on such bundles 
have already been studied by S. Amrutiya and L. A. Calvo independently in 
\cite{Amrutiya_connections_on_real_parab_vb,Amrutiya_real_parab_vb}, \cite{Calvo} (see
also \cite{GW16} for related results). In the present paper, we will 
focus on stable and polystable such bundles, in both the real and quaternionic case, and on unitary connections on 
such bundles. Given $(X,\sigma)$ as above ($X$ compact) and a finite subset $S\subset X$ such that $\sigma(S)\,=\,S$, 
we set $Y \,:=\, X\setminus S$ and we let $\Ga(Y,\,\si)\,:=\,\pi_1^{\mathrm{orb}}(Y/\si)$ be the orbifold fundamental 
group of the quotient of $Y$ by the action of the group $\langle \si\rangle$ generated by the anti-holomorphic 
involution $\si\vert_Y$. Then there is a short exact sequence
\begin{equation}\label{fund_ses}
1\,\lra\, \pi_1(Y) \,\lra\, \Ga(Y,\,\si) \,\lra\, \Z/2\Z \,\lra\, 1
\end{equation} and, by definition, a \textit{unitary representation} of this short exact sequence will be a group morphism from $\Ga(Y,\,\si)$ to an extension $\W(r)$ of the form 
\begin{equation}\label{extensions_by_unit_gps}
1\longrightarrow \U(r) \longrightarrow \W(r) \longrightarrow \mathbb{Z}/2\mathbb{Z} \longrightarrow 1
\end{equation} which induces the identity on $\Z/2\Z$ (see Definition \ref{rep_of_fund_ses}). Recall that, when $\Z/2\Z$ acts on $\U(r)$ by complex conjugation, one has $H^2(\Z/2\Z;\mathcal{Z}(\U(r)))=\{\pm1\}$. Therefore, with respect to that action, there are only two isomorphism classes of extensions of the form $\eqref{extensions_by_unit_gps}$, the trivial class being that of the semi-direct product $\U(r)\rtimes \Z/2\Z$. Representations of $\Ga(Y,\,\si)$ into the trivial
(respectively, non-trivial) type of extension will be called \textit{real} (respectively, \textit{quaternionic}) unitary representations of $\Ga(Y,\,\si)$.
We then prove the following result.

\begin{theorem}\label{MS_corresp_for_real_and_quat}
Let $(X,\si)$ be a compact Klein surface and let $S\subset X$ be a finite subset such that $\si(S)=S$. Let $Y$ be the punctured surface $X\setminus S$, assumed to be of negative Euler characteristic, and let $\Ga(Y,\,\si)$ be the orbifold fundamental group of the quotient surface $Y/\si$. Then there is a bijective correspondence between equivalence classes of real (respectively, quaternionic) unitary representations of $\Ga(Y,\,\si)$ and isomorphism classes of polystable real (respectively, quaternionic) parabolic vector bundles of parabolic degree zero on $(X,\si)$ with parabolic structure at the points of $S$.
\end{theorem}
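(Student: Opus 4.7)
The plan is to combine the classical Mehta-Seshadri theorem on the Riemann surface $X$ with a Galois-descent argument along the orientation double cover $Y \lra Y/\si$, the latter being encoded in the short exact sequence \eqref{fund_ses}. The Fuchsian interpretation of $\Ga(Y,\si)$ as a discrete subgroup of $\PGL(2,\R)$ of finite co-volume (present in the setup of the paper) will be the main organizing device: an equivariant vector bundle on the (extended) hyperbolic plane simultaneously encodes the parabolic bundle on $X$ and its real or quaternionic structure. The argument will be essentially identical in the two cases, the only difference being the sign $\tau^2 = \pm\Id$, which is controlled by the class in $H^2(\Z/2\Z;\cZ(\U(r)))$ of the extension \eqref{extensions_by_unit_gps}.

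First I would treat the direction from representations to bundles. Given a unitary representation $\rho : \Ga(Y,\si) \lra \W(r)$ of the prescribed type, restriction to the index-two normal subgroup $\pi_1(Y)$ yields a unitary representation $\rho_0$. The classical Mehta-Seshadri theorem applied to $X$ with parabolic structure at $S$ (and extended to arbitrary real weights via cocompact Fuchsian groups with cusps) associates to $\rho_0$ a polystable parabolic vector bundle $E_*$ of parabolic degree zero. The image under $\rho$ of any element of $\Ga(Y,\si)\setminus \pi_1(Y)$ then defines an anti-unitary isomorphism of the fibre of $E_*$ at a chosen base point, which extends by $\rho_0$-equivariance along the flat unitary connection to an anti-holomorphic lift $\tau$ of $\si$ satisfying $\tau^2 = \pm\Id$; compatibility with the parabolic filtrations at $S$ is automatic because both are constructed from $\rho$.

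Conversely, from a polystable real (respectively, quaternionic) parabolic vector bundle $(E_*,\tau)$ of parabolic degree zero, forgetting $\tau$ gives a polystable complex parabolic bundle, which by Mehta-Seshadri arises from a unitary representation $\rho_0 : \pi_1(Y) \lra \U(r)$, unique up to conjugation. The anti-holomorphic involution $\tau$ intertwines the flat unitary connection with its $\si$-pullback complex conjugate, which after transport to a $\si$-compatible base point (or a careful treatment of paths through $S$) produces an anti-unitary element $\widetilde{\tau} \in \W(r) \setminus \U(r)$ with $\widetilde{\tau}\, \rho_0(\ga)\, \widetilde{\tau}^{-1} = \rho_0(\si_*\ga)$ for every $\ga \in \pi_1(Y)$; the pair $(\rho_0,\widetilde{\tau})$ then assembles into a morphism $\rho : \Ga(Y,\si) \lra \W(r)$, real or quaternionic according to the sign of $\widetilde{\tau}^2$.

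The main obstacle will be the preservation of polystability in both directions, which is considerably more subtle than in the complex case. A real (or quaternionic) polystable bundle may contain complex-stable summands that are permuted by $\tau$, and a complex-stable summand equipped with a compatible $\tau$ may yield either a real or a quaternionic irreducible building block; conversely, a real-irreducible representation of $\Ga(Y,\si)$ may well become reducible after restriction to $\pi_1(Y)$. My plan is therefore to develop, as an intermediate step, a real/quaternionic isotypical decomposition of $(E_*,\tau)$ matching the decomposition of $\rho$ into irreducible real/quaternionic sub-representations of $\Ga(Y,\si)$, using the permutation action of $\widetilde{\tau}$ on the complex isotypical components of $\rho_0$ to keep track of which of them are self-conjugate and of what real/quaternionic type. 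Once this dictionary is set up carefully, the classical bijection at the level of $\pi_1(Y)$ transports cleanly to a bijection at the level of $\Ga(Y,\si)$, completing the proof.
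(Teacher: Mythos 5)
Your forward direction coincides with the paper's (Proposition \ref{construction_of_real_parab_vb}): restrict $\rho$ to $\pi_1(Y)$, invoke Mehta--Seshadri, and use the $\Ga(Y,\si)$-equivariant structure of $\widehat{\mathcal{H}}\times\C^r$, through an element of $\Ga(Y,\si)\setminus\pi_1(Y)$, to produce the anti-holomorphic lift $\tau$ with $\tau^2=\pm\Id$. For the converse, however, you take a genuinely different route. The paper does \emph{not} go through the complex Mehta--Seshadri theorem followed by a descent of the holonomy representation; it instead proves the existence of a $\tau$-\emph{invariant} adapted Hermitian--Yang--Mills metric on a polystable real (or quaternionic) parabolic bundle, by extending Simpson's equivariant existence theorem to anti-holomorphic actions (Theorem \ref{existence_of_HYM_metrics}). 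The resulting Chern connection is then flat and invariant on the nose, so its holonomy extends to $\Ga(Y,\si)$ by general parallel-transport arguments for invariant connections (Lemma \ref{ext_of_hol_rep}); notably, that route never needs to know that $(E_\bullet,\tau)$ is polystable as a \emph{complex} parabolic bundle. Your route uses that fact at the very first step (``forgetting $\tau$ gives a polystable complex parabolic bundle''), which is precisely the non-trivial half of Theorem \ref{polystability_over_R_and_over_C} and must be proved separately (via uniqueness of the Harder--Narasimhan filtration and of the maximal polystable subsheaf, or via the isotypical analysis you sketch).

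The other point you must nail down is the gauge ambiguity in constructing $\widetilde{\tau}$. The flat unitary connection furnished by Mehta--Seshadri is canonical only up to unitary gauge, and $\tau$ intertwines it with its conjugate pullback only up to a gauge transformation; the anti-unitary element $\widetilde{\tau}$ you extract therefore satisfies $\widetilde{\tau}\,\rho_0(\ga)\,\widetilde{\tau}^{-1}=\rho_0(\si_*\ga)$, but a priori $\widetilde{\tau}^2$ is only some element of the centralizer of $\rho_0(\pi_1(Y))$, and one must show both that it can be normalized so that $(\rho_0,\widetilde{\tau})$ assembles into an actual homomorphism on $\Ga(Y,\si)$ and that the resulting extension class $\pm1$ agrees with the sign of $\tau^2$ on the bundle. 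When $\rho_0$ is irreducible the centralizer is $S^1$ and this is routine; in the reducible case it is exactly where your isotypical bookkeeping has to carry the weight, as in the non-parabolic treatment of \cite{Sch_JSG}. The argument can be completed along these lines, but be aware that it buys nothing over the paper's invariant-metric approach, which sidesteps both issues at once; what your approach does buy is independence from the analytic input of Simpson's equivariant existence theorem, at the cost of proving Theorem \ref{polystability_over_R_and_over_C} first.
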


More precisely, we prove that every polystable real (respectively, quaternionic) parabolic 
vector bundle of parabolic degree zero on $(X,\si,S)$ is isomorphic to a parabolic bundle of the 
form $\cE(\rho)_\bullet$, where $\rho$ is a real (respectively, quaternionic) unitary 
representation of $\Ga(Y,\,\si)$ and $\cE(\rho)_\bullet$ is equipped with its canonical real 
(respectively, quaternionic) structure (constructed in Proposition 
\ref{construction_of_real_parab_vb}). A similar result is then proved for compact-type real 
parabolic vector bundles (see Definition \ref{compact_type_real_and_quat_parab_bundles}, 
Proposition \ref{construction_of_compact_type_real_parab_vb} and Theorem 
\ref{MS_corresp_for_compact_type_real}).

\section{Parabolic vector bundles and stability}\label{parab_vb_and_stability}

\subsection{Real and quaternionic parabolic vector bundles}

Let $X$ be a compact connected Riemann surface.
Let $\si:X\lra X$ be an anti-holomorphic involution of $X$. 
Fix a finite subset $S=\{x_1,\cdots,x_d\}\subset\ X$
such that $\sigma(S)= S$. Note that $\sigma$ may not fix $S$ point-wise (compare \cite[p.~212]{MS}). Let $E$ be a holomorphic vector bundle on $X$. 

\begin{definition}[{\cite[Definition 1.5]{MS}}]
A \textit{quasi-parabolic structure} on $E$ over $S$ consists of $d$ strictly decreasing filtrations
$$
E_{x_j}\,=E^1_j\supsetneq E^2_j \,\supsetneq\cdots\supsetneq\,
E^{n_j}_j \supsetneq E^{n_j+1}_j \,=0 \quad (1\leq j\leq d),
$$ and a \textit{parabolic structure} on $E$ over $S$ is a
quasi-parabolic structure as above, together with $d$ strictly increasing sequences of real numbers
$$
0\,\leq\,\alpha^1_j\,<\,\alpha^2_j\,<\,
\cdots\,<\, \alpha^{n_j}_j \,<\, 1\ \quad (1\,\leq\, j\,\leq\, d)\, .
$$
The real number $\alpha^i_j$ is called the \textit{parabolic weight} of the subspace $E^i_j$ in
the quasi-parabolic filtration. The \textit{multiplicity} of a parabolic weight
$\alpha^i_j$ at $x_j$ is defined to be the dimension of the complex vector space $E^i_j/E^{i+1}_j$.
A \textit{parabolic vector bundle} on $(X,S)$ is a triple $(E,\{E^i_j\}, \,\{\alpha^i_j\})$ consisting of a vector bundle on $X$ equipped with a parabolic structure over $S$. 
\end{definition}

We refer to \cite{MS} for the definition of a morphism of parabolic vector bundles. For 
notational convenience, a parabolic vector bundle $(E,\{E^i_j\}, \,\{\alpha^i_j\})$ as above 
will also be denoted by $E_{\bullet}$.

Let $E$ be a holomorphic vector bundle on $X$ of rank $r$. Let $\overline{E}$ be the 
$C^\infty$ complex vector bundle of rank $r$ on $X$ whose underlying real vector bundle of 
rank $2r$ is the $C^\infty$ real vector bundle underlying $E$ and whose multiplication by $\sqrt{-1}$
on the fibers coincides with the multiplication by $-\sqrt{-1}$
on the fibers of $E$. The holomorphic structure on $E$ induces a holomorphic
structure on $\sigma^*\overline{E}$, uniquely characterized by the fact that the holomorphic sections of $\si^*\ov{E}$ over U are the anti-holomorphic sections of $E$ on $\si(U)$. Note that a morphism of holomorphic vector bundles $\phi:E\lra F$ gives rise to a morphism of holomorphic vector bundles $\si^*\ov{\phi}:\si^*\ov{E}\lra \si^*\ov{F}$.
Let then $E_{\bullet}\,=(E,\{E^i_j\}, \,\{\alpha^i_j\})$ be a parabolic bundle. For all $x_j\in S$, the filtration $\{E^i_j\}$ of $E_{x_j}$ produces a filtration
of the fiber $(\sigma^*\overline{E})_{\sigma(x_j)}$ using the conjugate linear
identification of $E_{x_j}$ with $(\sigma^*\overline{E})_{\sigma(x_j)}$. For any $j$, the
parabolic weights $\{\alpha^i_j\}_{i=1}^{n_j}$ associated to $\{E^i_j\}_{i=1}^{n_j}$ can be
considered as parabolic weights associated to this filtration of
$(\sigma^*\overline{E})_{\sigma(x_j)}$. The new parabolic vector bundle obtained this way will be denoted by
$\sigma^*\overline{E}_{\bullet}$. We then have the following definition of a real parabolic vector
bundle (equivalent to \cite[Definition 3.1]{Amrutiya_real_parab_vb}; see also \cite[Section~3]{Calvo}), and its immediate generalization to the quaternionic case.

\begin{definition}[Real and quaternionic parabolic vector bundles]
A \textit{real parabolic vector bundle} on $(X,\si,S)$ is a parabolic vector bundle $E_{\bullet}\,=(E,\{E^i_j\}, 
\,\{\alpha^i_j\})$ on $(X,S)$, together with a holomorphic isomorphism of parabolic vector bundles $\phi:E_{\bullet}\lra \si^*\ov{E}_{\bullet}$ such that $\si^*\ov{\phi}\circ \phi = \Id_E$.
A \textit{quaternionic parabolic vector bundle} on $(X,\si,S)$ is a parabolic vector bundle $E_{\bullet}$ on $(X,S)$, together with a holomorphic isomorphism of parabolic vector bundles $\phi:E_{\bullet}\lra \si^*\ov{E}_{\bullet}$ such that $\si^*\ov{\phi}\circ \phi = -\Id_E$.
\end{definition}

Note that a real or quaternionic parabolic vector bundle in particular satisfies $\si^*\ov{E}_\bullet \simeq E_\bullet$. Conversely, if $E_\bullet$ is a \textit{simple} parabolic vector bundle over $(X,\si)$ that satisfies $\si^*\ov{E}_\bullet \simeq E_\bullet$, then $E_\bullet$ admits either a real or a quaternionic structure. Let now $E^{\vee}_{\bullet}$ be the parabolic dual of $E_{\bullet}$ (see \cite[Section~3]{Yo}, \cite[p.~309]{Bi}). The holomorphic vector bundle underlying the parabolic vector bundle $E^{\vee}_{\bullet}$ will be denoted by $E^{\vee}_0$. Note that $E^{\vee}_0$ is in general only a subsheaf of $E^\vee$ (the dual of $E$ as a vector bundle), and that the inclusion map $E^{\vee}_0\,\hookrightarrow\,E^\vee$ is an isomorphism over $X\setminus S$, but fails
to be an isomorphism over $x_j\in S$ if there is a non-zero parabolic weight of $E_{\bullet}$ at $x_j$. Note that $(E_\bullet^\vee)^\vee$ is canonically isomorphic to $E_\bullet$. 
\begin{definition}[Compact-type real parabolic vector bundles]\label{compact_type_real_and_quat_parab_bundles}
A \textit{compact-type real parabolic vector bundle} on $(X,\si,S)$ is a parabolic vector bundle
$E_{\bullet}\,=(E,\{E^i_j\}, \,\{\alpha^i_j\})$ on $(X,S)$, together with a holomorphic isomorphism $\phi:E_\bullet\lra (\si^*\ov{E})_\bullet^\vee$ such that $(\si^*\ov{\phi})^\vee \circ \phi = \Id_E$.
\end{definition}

\begin{remark}\label{on_compact_type_quaternionic} 
Note that a compact-type real parabolic vector bundle in particular satisfies $(\si^*\ov{E})_\bullet^\vee \simeq E_\bullet$. Conversely, if $E_\bullet$ is a \textit{simple} parabolic vector bundle over $(X,\si)$ that satisfies $(\si^*\ov{E})_\bullet^\vee \simeq E_\bullet$, then $E_\bullet$ admits a compact-type real structure (because $\mathrm{Aut}(E_\bullet)\simeq \mathcal{Z}(\mathbf{GL}(r;\C)) \,\simeq\,\C^*$ is $2$-divisible). In principle, one could also define compact-type quaternionic parabolic vector bundles (by requiring that $(\si^*\ov{\phi})^\vee \circ \phi = -\Id_E$), but these can in fact only occur for vector bundles with structure group contained in $\mathbf{SL}(2m;\C)$, as $\mathcal{Z}(\mathbf{SL}(2m;\C))$ is no longer $2$-divisible.
\end{remark}

\subsection{Stability}

The \textit{parabolic degree} of a parabolic vector bundle $E_{\bullet}\,=(E,\{E^i_j\},
\,\{\alpha^i_j\})$ is defined (\cite[Definition~1.11]{MS}) to be $$ \text{par-deg}(E_{\bullet})\,=\text{degree}(E)+\sum_{j=1}^d \sum_{i=1}^{n_j}
\alpha^i_j \dim (E^i_j/E^{i+1}_j).$$ Take any holomorphic subbundle $F\subset E$. For each $x_j\in\,
S$, the fiber $F_{x_j}$ has a filtration obtained by intersecting the quasi-parabolic filtration
of $E_{x_j}$ with the subspace $F_{x_j}$. The parabolic weight of a subspace $V\subset\,
F_{x_j}$ in this filtration is the maximum of the numbers $\{\alpha^i_j\mid V\subset E^i_j\cap F_{x_j}\}$. This parabolic structure on $F$ will be denoted by $F_{\bullet}$.

\begin{definition}[Stability for parabolic bundles]
A parabolic vector bundle $E_{\bullet}\,=(E,\{E^i_j\}, \,\{\alpha^i_j\})$ is called
\textit{stable} (respectively, \textit{semistable}) if for all subbundles $F\subsetneq
E$ of positive rank, the inequality
$$
\frac{\text{par-deg}(F_{\bullet})}{\text{rank}(F)}\,<\,
\frac{\text{par-deg}(E_{\bullet})}{\text{rank}(E)}\ \
\left(\text{respectively, } \ \frac{\text{par-deg}(F_{\bullet})}{\text{rank}(F)} \leq\,
\frac{\text{par-deg}(E_{\bullet})}{\text{rank}(E)}\right)
$$
holds \cite[Definition~1.13]{MS}. A parabolic vector bundle $E_{\bullet}$ is called \textit{polystable} if it is semistable and isomorphic to a direct sum of parabolic stable vector bundles (see \cite{Yo}, \cite{Bi} for
direct sums of parabolic vector bundles).
\end{definition}

A stable parabolic bundle is simple, meaning that any holomorphic automorphism of it is a constant
non-zero scalar multiplication (see \cite{MS}). The parabolic bundles appearing in the direct sum defining a polystable parabolic bundle all have the same parabolic slope, equal to that of $E$.

\begin{definition}[Stability for real and quaternionic parabolic bundles]\label{stability_of_real_or_quat_parab_vb}
A real or quaternionic parabolic vector bundle $(E_{\bullet},\phi) \,=((E,\{E^i_j\}, \,\{\alpha^i_j\}),
\phi)$ is called
\textit{stable} (respectively, \textit{semistable}) if for all subbundles $F\subsetneq
E$ of positive rank with $\phi(F)\,=\sigma^*\overline{F}$, the inequality
$$
\frac{\text{par-deg}(F_{\bullet})}{\text{rank}(F)}\,<\,
\frac{\text{par-deg}(E_{\bullet})}{\text{rank}(E)}\ \
\left(\text{respectively, } \ \frac{\text{par-deg}(F_{\bullet})}{\text{rank}(F)} \leq\,
\frac{\text{par-deg}(E_{\bullet})}{\text{rank}(E)}\right)
$$
holds. A real (respectively, quaternionic) parabolic vector bundle $(E_{\bullet},\phi)$ is called
\textit{polystable} if it is semistable and isomorphic to a direct sum of real
(respectively, quaternionic) parabolic stable vector bundles.
\end{definition}

A stable real parabolic vector bundle is not necessarily simple; take for instance the trivial 
parabolic structure on a real vector bundle of the form $E\,=\,F\bigoplus\si^*\ov{F}$, with $F$ stable 
and $\si^*\ov{F}\,\not\simeq\, F$. However, the automorphism group of such a bundle is either 
$\R^*$ or $\C^*$, as follows from adapting the techniques of \cite{Sch_JSG} to the present 
parabolic setting.

\begin{definition}[Stability for compact-type real parabolic bundles]
A compact-type real 
parabolic vector bundle $(E_{\bullet},\phi) \,=((E,\{E^i_j\}, \,\{\alpha^i_j\}), \phi)$ is called
\textit{stable} (respectively, \textit{semistable}) if for all subbundles $F\subsetneq
E$ of positive rank such that $\phi(F)$ annihilates $\si^*\ov{F}$, the inequality
$$
\frac{\text{par-deg}(F_{\bullet})}{\text{rank}(F)}\,<\,
\frac{\text{par-deg}(E_{\bullet})}{\text{rank}(E)}\ \
\left(\text{respectively, } \ \frac{\text{par-deg}(F_{\bullet})}{\text{rank}(F)} \leq\,
\frac{\text{par-deg}(E_{\bullet})}{\text{rank}(E)}\right)
$$
holds. A compact-type real (respectively, quaternionic) parabolic vector bundle $(E_{\bullet},\tau)$ is called
\textit{polystable} if it is semistable and isomorphic to a direct sum of compact-type real
(respectively, quaternionic) parabolic stable vector bundles.
\end{definition}

We can then state the main result of Section \ref{parab_vb_and_stability}.

\begin{theorem}\label{polystability_over_R_and_over_C}
A real or quaternionic parabolic vector bundle $((E,\{E^i_j\}, \,\{\alpha^i_j\}),
\tau)$ is semistable (respectively, polystable) if and only if the parabolic vector
bundle $(E,\{E^i_j\}, \,\{\alpha^i_j\})$ is semistable (respectively, polystable).

A compact-type real
parabolic vector bundle $((E,\{E^i_j\}, \,\{\alpha^i_j\}),
\tau)$ is semistable (respectively, polystable) if and only if the parabolic vector
bundle $(E,\{E^i_j\}, \,\{\alpha^i_j\})$ is semistable (respectively, polystable).
\end{theorem}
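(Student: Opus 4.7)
The plan is to prove both equivalences by handling the trivial implication (complex semi/poly\-stability implies the decorated version) directly, and by extracting from an arbitrary subbundle a $\tau$- or $\phi$-invariant subbundle to which the decorated stability hypothesis applies.

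The easy direction is immediate for semistability, since Definition \ref{stability_of_real_or_quat_parab_vb} (and its compact-type analogue) tests against a smaller class of subbundles than the complex definition does. For polystability, I would write $E_\bullet \simeq \bigoplus_i S^{(i)}_\bullet \otimes \C^{n_i}$ as a sum of isotypical components, where the $S^{(i)}_\bullet$ are pairwise non-isomorphic stable parabolic bundles; the real structure $\tau$ permutes these components, and each $\tau$-orbit of length one yields a stable real or quaternionic summand (the dichotomy coming from the fact that $\mathrm{Aut}(S^{(i)}_\bullet)\simeq\C^*$ is $2$-divisible), while each $\tau$-orbit of length two yields a stable real summand of the form $S^{(i)}_\bullet\oplus \si^*\ov{S}^{(i)}_\bullet$ equipped with the swap involution (compare the example after Definition \ref{stability_of_real_or_quat_parab_vb}).

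For the harder direction in the real/quaternionic case, assume $(E_\bullet,\tau)$ is semistable and take an arbitrary holomorphic subbundle $F\subset E$. Set $F' := \tau^{-1}(\si^*\ov{F})\subset E$. Because $\tau$ is an isomorphism of parabolic bundles and the $\si^*\ov{\,\cdot\,}$ operation preserves rank and parabolic degree, $F'_\bullet$ satisfies $\mathrm{rk}(F') = \mathrm{rk}(F)$ and $\text{par-deg}(F'_\bullet) = \text{par-deg}(F_\bullet)$. Since $\tau^2 = \pm\Id_E$, both $F\cap F'$ and $F+F'$ are $\tau$-invariant; applying Definition \ref{stability_of_real_or_quat_parab_vb} to them, together with the standard parabolic sub-additivity
\[
\text{par-deg}((F\cap F')_\bullet) + \text{par-deg}((F+F')_\bullet) \,\geq\, \text{par-deg}(F_\bullet) + \text{par-deg}(F'_\bullet)
\]
and the rank identity $\mathrm{rk}(F\cap F') + \mathrm{rk}(F+F') = 2\,\mathrm{rk}(F)$, forces $\text{par-slope}(F_\bullet) \leq \text{par-slope}(E_\bullet)$, which is the wanted semistability of $E_\bullet$. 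For polystability, one then applies this semistability result to the maximal polystable subobject (socle) of $E_\bullet$, which is $\tau$-invariant by uniqueness, and decomposes its isotypical components into $\tau$-orbits as in the easy direction to produce a decomposition of $(E_\bullet,\tau)$ into stable real/quaternionic summands.

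The compact-type case runs along the same lines with $F' = \tau^{-1}(\si^*\ov{F})$ replaced by the subbundle $F^\perp \subset E$ defined as the preimage under $\phi$ of the annihilator of $\si^*\ov{F}$ inside $(\si^*\ov{E})_\bullet^\vee$. Non-degeneracy of $\phi$ gives $\mathrm{rk}(F^\perp) = \mathrm{rk}(E) - \mathrm{rk}(F)$ and $\text{par-deg}(F^\perp_\bullet) = \text{par-deg}(E_\bullet) - \text{par-deg}(F_\bullet)$, while $F\cap F^\perp$ is isotropic in the compact-type sense, so the hypothesis applies to it and the sub-additivity argument closes as before. The main obstacle will be the bookkeeping with parabolic weights: one must check that the parabolic structures induced on $F\cap F'$, $F+F'$, and $F^\perp$ coincide with those produced by intersecting, summing, or annihilating the parabolic filtrations, and that parabolic duality on $(\si^*\ov{E})_\bullet^\vee$ is compatible with the annihilator filtration used to define $F^\perp$; once these routine verifications are settled the slope inequalities propagate exactly as sketched.
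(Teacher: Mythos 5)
Your treatment of the real/quaternionic semistability direction is a genuinely different (and valid) route from the paper's. The paper argues via uniqueness of the Harder--Narasimhan filtration of $E_\bullet$, which forces the maximal destabilizing subbundle to be $\phi$-invariant; you instead take an arbitrary $F$, form $F'=\phi^{-1}(\si^*\ov{F})$, observe that $F\cap F'$ and the saturation of $F+F'$ are $\phi$-invariant (this uses $\si^*\ov{\phi}\circ\phi=\pm\Id$, which gives $\phi(F)=\si^*\ov{F'}$), and close with supermodularity of the parabolic degree. The weight bookkeeping you flag is indeed the only content there, and it is standard: fibrewise one has $\dim(E^i_j\cap(F+F'))\geq\dim(E^i_j\cap F)+\dim(E^i_j\cap F')-\dim(E^i_j\cap F\cap F')$, and Abel summation against the increasing weights, plus additivity of ordinary degree in $0\to F\cap F'\to F\oplus F'\to F+F'\to 0$, gives the inequality you use. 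What this buys is a proof that does not presuppose HN theory for parabolic sheaves; what the paper's argument buys is brevity. Your easy polystability direction (isotypical components permuted by $\tau$, orbits of length one or two) is the same as the paper's.

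Two points, however, need repair. First, in the compact-type case your formula $\text{par-deg}(F^\perp)=\text{par-deg}(E_\bullet)-\text{par-deg}(F_\bullet)$ has the wrong sign: since $\mathrm{Ann}(\si^*\ov{F})\simeq\bigl((\si^*\ov{E})/(\si^*\ov{F})\bigr)^\vee_\bullet$, one gets $\text{par-deg}(F^\perp_\bullet)=\text{par-deg}(F_\bullet)-\text{par-deg}(E_\bullet)$; with your sign the final inequality collapses to $0\leq 0$ and proves nothing. With the correct sign, and using that $E_\bullet\simeq(\si^*\ov{E})^\vee_\bullet$ forces $\text{par-deg}(E_\bullet)=0$, the conclusion $2\,\text{par-deg}(F_\bullet)\leq 0$ does come out. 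But there is a second catch: $F+F^\perp$ is coisotropic, not isotropic, so the compact-type semistability hypothesis does not apply to it directly; you must apply it to $(F+F^\perp)^\perp=F\cap F^\perp$ and transfer the slope bound across $\perp$ using the rank and degree relations above. Second, your polystability step in the hard direction is stated backwards: the goal there is a decomposition of $E_\bullet$ into stable \emph{parabolic} summands, not of $(E_\bullet,\tau)$ into stable real/quaternionic ones (the latter is the hypothesis). What is missing is the argument that makes the $\tau$-invariant socle equal to all of $E$: either the paper's version (polystability of $(E_\bullet,\tau)$ yields a $\tau$-invariant complement of the socle, whose non-vanishing would contradict maximality), or the reduction to $(E_\bullet,\tau)$ stable, where the socle is a $\tau$-invariant subbundle of equal parabolic slope and hence must be everything.
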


\begin{proof}
Let $(E_\bullet\, ,\phi)$ be a real or quaternionic parabolic vector bundle. If $E_\bullet$ is semistable, then $(E_\bullet\, , \phi)$ is evidently semistable. For the converse, note that if $(E_\bullet\, , \phi)$ is real (respectively, quaternionic), then it follows from the uniqueness of the Harder--Narasimhan filtration of the parabolic vector bundle $E_\bullet$, it follows that this filtration is preserved by $\phi$. Consequently, if $(E_\bullet\, ,\phi)$ is semistable, then $E_\bullet$ is semistable. A similar argument applies in the compact-type case.

If $(E_\bullet\, , \phi)$ is a real or quaternionic parabolic vector bundle such that $E_\bullet$ is polystable, then $$E_\bullet \simeq E^{(1)}_\bullet \oplus \ldots \oplus E^{(k)}_\bullet$$ with $E^{(m)}_\bullet$ stable. The issue is that a given $E^{(m)}_\bullet$ in that decomposition may not be a real (respectively, quaternionic) sub-bundle of $(E_\bullet\, ,\phi)$. But, if it is not, then $\si^*\ov{E^{(m)}}_\bullet$ also appears in the decomposition of $E_\bullet$ as a direct sum of stable parabolic bundles, and it is easy to check, adapting the techniques of \cite{Sch_JSG}, that in this case $E^{(m)}\oplus \si^*\ov{E^{(m)}}$ is a stable real parabolic bundle. Conversely, if $(E_\bullet\, , \phi)$ is polystable, then
we know that $E_\bullet$ is semistable because
$(E_\bullet\, , \phi)$ is semistable, therefore that
$E_\bullet$ has a unique maximal polystable parabolic
subbundle $F$ with parabolic slope the same as that of $E_\bullet$
\cite[page 23, Lemma 1.5.5]{HL}. Now, from the uniqueness
of such a maximal polystable parabolic
subbundle, it follows that $F$ is invariant under $\phi$ (here, $\phi$-invariant means that $\phi(F)\subset\si^*\ov{F}$). As $(E_\bullet\, ,\phi)$ is polystable, the subbundle $F$ has a
$\phi$-invariant complement $F'$. But if $F'$ is non-zero, then the maximality of
$F$ is contradicted, because $F'$ also has a unique maximal polystable parabolic
subbundle $F$ with parabolic slope same as that of $E_\bullet$.
Therefore, we conclude that $F'\,=0$, which implies that
$E_\bullet$ is polystable. Again, a similar argument applies in the compact-type case.
\end{proof}

\begin{corollary}\label{geom_stable_bdles}
A stable real (respectively, quaternionic) parabolic vector bundle on $(X,\si,S)$ is, in general, only polystable as a parabolic vector bundle. If it is in fact stable as a parabolic vector bundle, it will be called geometrically stable as a real (respectively, quaternionic) parabolic vector bundle.

A stable compact-type real
parabolic vector bundle on $(X,\si,S)$ is, in general, only polystable as a parabolic vector bundle. If it is in fact stable as a parabolic vector bundle, it will be called geometrically stable as a compact-type real
parabolic vector bundle.
\end{corollary}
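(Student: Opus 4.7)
The corollary combines two claims: (i) a stable real (respectively, quaternionic, compact-type real) parabolic vector bundle is always polystable when regarded as a plain parabolic vector bundle, and (ii) it need not be stable in that stronger sense. My plan is to deduce (i) directly from Theorem~\ref{polystability_over_R_and_over_C} and to settle (ii) by producing an explicit example in each category.

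For (i), I would simply observe that any stable object is a (one-summand) direct sum of stable objects in its own category, and therefore polystable in that category by definition. Applying Theorem~\ref{polystability_over_R_and_over_C} then transfers this polystability to the underlying parabolic vector bundle $E_{\bullet}$, yielding the first assertion in each of the three cases uniformly.

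For (ii) in the real case, I would take a stable parabolic bundle $F_\bullet$ on $(X,S)$ with $\si^*\ov{F}_\bullet \,\not\simeq\, F_\bullet$ and set $E_\bullet \,:=\, F_\bullet \oplus \si^*\ov{F}_\bullet$, equipped with the real structure $\phi$ that exchanges the two summands via the identity. Since $F_\bullet$ and $\si^*\ov{F}_\bullet$ have the same parabolic slope as $E_\bullet$, the subbundle $F_\bullet$ witnesses the fact that $E_\bullet$ is not stable as a parabolic bundle. To verify that $(E_\bullet,\phi)$ is nonetheless stable as a real parabolic bundle, I would classify all parabolic subbundles of $E_\bullet$ having parabolic slope equal to that of $E_\bullet$: by the stability (hence simplicity) of $F_\bullet$ together with the assumption $F_\bullet \,\not\simeq\, \si^*\ov{F}_\bullet$, they reduce to the two summands themselves, and neither of these is $\phi$-invariant. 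The quaternionic case is treated analogously by rescaling $\phi$ so that $\si^*\ov{\phi}\circ \phi \,=\, -\Id_E$, and the compact-type real case by taking $E_\bullet \,:=\, F_\bullet \oplus (\si^*\ov{F})_\bullet^\vee$ with the obvious duality pairing. The only mildly delicate point, which is also the main (albeit minor) obstacle, lies in the compact-type step, where the notion of destabilizing subbundle involves the annihilation condition ``$\phi(G)$ annihilates $\si^*\ov{G}$'' rather than $\phi$-invariance; unpacking this condition on subbundles of $F_\bullet \oplus (\si^*\ov{F})_\bullet^\vee$ reduces once again to the non-existence of the relevant parabolic isomorphism, which is ensured by the choice of $F_\bullet$.
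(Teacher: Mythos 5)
Your proposal is correct and follows essentially the same route as the paper: polystability of the underlying parabolic bundle is read off from Theorem~\ref{polystability_over_R_and_over_C} (a stable object being trivially polystable in its own category), and the failure of stability is witnessed by exactly the example $E_\bullet = F_\bullet \oplus \si^*\ov{F}_\bullet$ with $F_\bullet$ stable and $\si^*\ov{F}_\bullet \not\simeq F_\bullet$ that the paper records just before the corollary.
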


\section{Parabolic vector bundles arising from unitary representations}

\subsection{From unitary representations to polystable parabolic vector bundles}

Let $(X,\si)$ be a compact Klein surface and let $S=\{x_1,\,\ldots\, x_d\}$ be a finite subset of $X$ satisfying $\si(S)=S$. Set $Y:=X\setminus S$ and let $\Ga(Y,\,\si)$ be the orbifold fundamental group of the quotient surface $Y/\si$. Recall that $\Ga(Y,\,\si)$ fits in the fundamental short exact sequence \eqref{fund_ses}. Consider the action of $\Z/2\Z$ on the unitary group $\U(r)$ by complex conjugation and recall that $H^2(\Z/2\Z;\cZ(\U(r)))=\{\pm1\}$, where $\cZ(\U(r))\simeq S^1$ is the center of $\U(r)$. We denote by $\W(r)_\pm$ the corresponding extensions $$1\lra\U(r) \lra \W(r)\pm\lra \Z/2\Z\lra 1.$$ In particular, $\W(r)_+\simeq \U(r)\rtimes\Z/2\Z$.

\begin{definition}[Real and quaternionic unitary representations]\label{rep_of_fund_ses}
A \textit{real unitary representation} of $\Ga(Y,\,\si)$ is a group morphism $\rho\,:\,\Ga(Y,\,\si)
\, \lra \,\W(r)_+$ such that the following diagram commutes.
$$
\xymatrix{
1 \ar[r] & \pi_1(Y) \ar[r] \ar[d] & \Ga(Y,\,\si) \ar[r] \ar[d]^{\rho} & \Z/2\Z \ar[r] \ar@{=}[d] & 1\\
1 \ar[r] & \U(r) \ar[r] & \W(r)_+\ar[r] & \Z/2\Z \ar[r] & 1
}$$
A \textit{quaternionic unitary representation} of $\Ga(Y,\,\si)$ is a group morphism $\rho:\Ga(Y,\,\si) \lra \W(r)_-$ such that the following diagram commutes.
$$
\xymatrix{
1 \ar[r] & \pi_1(Y) \ar[r] \ar[d] & \Ga(Y,\,\si) \ar[r] \ar[d]^{\rho} & \Z/2\Z \ar[r] \ar@{=}[d] & 1\\
1 \ar[r] & \U(r) \ar[r] & \W(r)_-\ar[r] & \Z/2\Z \ar[r] & 1
}$$
Two real (respectively, quaternionic) unitary representations $\rho$ and $\rho'$ are called \textit{equivalent} if there is an element $u\in\U(r)$ such that, for all $\ga\,\in\,\Ga(Y,\,\si)$, we have $u\rho(\ga)u^{-1} = \rho'(\ga)$.
\end{definition}

In particular, if $\rho\,:\,\Ga(Y,\,\si)\,\lra\, \W(r)_\pm$ is either a real or a quaternionic representation, $\rho|_{\pi_1(Y)}:\pi_1(Y)\lra \U(r)$ defines a parabolic vector bundle $\cE(\rho)_{\bullet}$ over $X$ (\cite[Section~1]{MS}). The parabolic weights at $x_j$ and their multiplicities are 
given by the eigenvalues of the image under $\rho$ of an element of $\pi_1(Y)$ produced by an oriented loop around $x_j$ (this element of $\pi_1(Y)$ 
is not unique, but its conjugacy class is, so the eigenvalues and their multiplicities are also uniquely determined). By \cite[Proposition~1.12]{MS}, the parabolic vector bundle 
$\cE(\rho)_{\bullet}$ is polystable of parabolic degree zero, and it is stable if and only if the representation $\rho|_{\pi_1(Y)}$ is irreducible. Conversely, any polystable parabolic vector bundle of rank $r$ and parabolic degree zero is given by a homomorphism from $\pi_1(Y)$ to $\U(r)$ (\cite{MS}).

\begin{proposition}\label{construction_of_real_parab_vb}
If $\rho\,:\,\Ga(Y,\,\si)\,\lra\, \W(r)_+$ is a real unitary representation of $\Ga(Y,\,\si)$, then the parabolic vector bundle $\cE(\rho)_\bullet$, of parabolic degree $0$, has a canonical real structure, with respect to which it is polystable.

If $\rho\,:\,\Ga(Y,\,\si)\,\lra\, \W(r)_-$ is a quaternionic unitary representation of $\Ga(Y,\,\si)$, then the parabolic vector bundle $\cE(\rho)_\bullet$, of parabolic degree $0$, has a canonical quaternionic structure, with respect to which it is polystable.
\end{proposition}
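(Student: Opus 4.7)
The plan is to upgrade the Mehta--Seshadri construction of $\cE(\rho)_\bullet$, which only uses $\rho|_{\pi_1(Y)}$, to incorporate the full representation $\rho$ of $\Ga(Y,\si)$. Polystability of $\cE(\rho)_\bullet$ as a plain parabolic bundle is already granted by \cite[Proposition 1.12]{MS}, so what remains is to build a canonical $\phi\,:\,\cE(\rho)_\bullet\lra\si^*\ov{\cE(\rho)}_\bullet$, verify the identity $\si^*\ov\phi \circ \phi = \pm\Id$, and check compatibility with the parabolic structure over $S$.

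Over $Y$, the bundle $\cE(\rho)|_Y$ is the flat bundle $\widetilde{Y} \times_{\pi_1(Y)} \C^r$ attached to $\rho|_{\pi_1(Y)}$, where $\widetilde Y\lra Y$ is the universal cover. Pick a lift $\widetilde\si \in \Ga(Y,\si)$ of the non-trivial element of $\Z/2\Z$; it acts anti-holomorphically on $\widetilde Y$ and covers $\si$. Write $\rho(\widetilde\si) = (u, 1)$ with $u \in \U(r)$ in the cocycle presentation of $\W(r)_\pm$, and define an antilinear map
$$\widetilde\tau\,:\,\widetilde{Y} \times \C^r\,\lra\, \widetilde{Y} \times \C^r\,,\qquad (y, v) \,\lmt\, (\widetilde\si\cdot y,\; u\ov{v})\,.$$
The stipulation that $\Z/2\Z$ acts on $\U(r)$ by complex conjugation in \eqref{extensions_by_unit_gps} translates, via the homomorphism property of $\rho$, to $\rho|_{\pi_1(Y)}(\widetilde\si\gamma\widetilde\si^{-1}) = u\,\ov{\rho|_{\pi_1(Y)}(\gamma)}\,u^{-1}$ for all $\gamma \in \pi_1(Y)$; this is exactly the compatibility needed for $\widetilde\tau$ to descend to an antilinear $\tau$ on $\cE(\rho)|_Y$ covering $\si|_Y$, or equivalently to a holomorphic isomorphism $\phi\,:\,\cE(\rho)|_Y \lra \si^*\ov{\cE(\rho)}|_Y$. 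Replacing $\widetilde\si$ by $\gamma_0 \widetilde\si$ with $\gamma_0 \in \pi_1(Y)$ multiplies $\widetilde\tau$ on the left by the deck transformation $\gamma_0$, which acts trivially on the quotient, so $\phi$ is canonical.

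The core sign computation is $\widetilde\tau^2(y, v) = (\widetilde\si^2 y,\, u\ov{u}\cdot v)$. Since $\widetilde\si^2 \in \pi_1(Y)$, on the quotient this descends to fiber-wise multiplication by the scalar $\rho|_{\pi_1(Y)}(\widetilde\si^2)^{-1}\cdot u\ov{u}$; and the homomorphism property of $\rho$ combined with the cocycle relation in $\W(r)_\pm$ yields $\rho(\widetilde\si^2) = \rho(\widetilde\si)^2 = (u\ov{u}\cdot c(1,1),\, 0)$, with $c(1,1) = +1$ in the real case and $c(1,1) = -1$ in the quaternionic case. Hence $\tau^2 = \Id$ in the real case and $\tau^2 = -\Id$ in the quaternionic case, which is exactly $\si^*\ov\phi \circ \phi = \pm\Id$. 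This cocycle bookkeeping is the one genuinely delicate point in the argument, as it is where the two cases diverge.

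It remains to extend $\phi$ across $S$ and to conclude polystability. The conjugation formula above, together with the fact that the orientation-reversing $\si$ sends a positively oriented small loop around $x_j$ to a negatively oriented one around $\si(x_j)$, forces the monodromy eigenvalues of $\rho|_{\pi_1(Y)}$ at $x_j$ and $\si(x_j)$ to coincide. Hence the parabolic weights match at the two punctures and $\phi$ carries the filtration at $x_j$ to that at $\si(x_j)$, extending uniquely to a morphism of parabolic bundles $\cE(\rho)_\bullet \lra \si^*\ov{\cE(\rho)}_\bullet$ still satisfying $\si^*\ov\phi \circ \phi = \pm\Id$. Polystability of $(\cE(\rho)_\bullet,\phi)$ as a real (respectively, quaternionic) parabolic vector bundle is then immediate from Theorem \ref{polystability_over_R_and_over_C} and the polystability of $\cE(\rho)_\bullet$ as a plain parabolic bundle.
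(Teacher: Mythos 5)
Your proof is correct and takes essentially the same route as the paper's: both build the real (respectively, quaternionic) structure by descending the action of a lift of the non-trivial element of $\Z/2\Z$ on the equivariant bundle $\widetilde{Y}\times\C^r$ (the paper uses $\cH\times\C^r$), check the sign of the square, verify compatibility with the parabolic structure at $S$, and conclude polystability from \cite[Proposition~1.12]{MS} together with Theorem \ref{polystability_over_R_and_over_C}. The only difference is one of detail: you carry out explicitly the cocycle computation for $\W(r)_\pm$ and the matching of weights at $x_j$ and $\si(x_j)$, which the paper delegates to \cite[Section 2]{Sch_JDG} and to the extension of the stabilizer-equivariant structure from $\mathrm{Stab}_{\pi_1(Y)}(h)$ to $\mathrm{Stab}_{\Ga(Y,\si)}(h)$.
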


This is proved by going deeper into the details of the construction of the parabolic vector bundle $\cE(\rho)_\bullet$ over $X$. We do it below in the real case, the quaternionic one being similar (see \cite[Section 2]{Sch_JDG}). Note that, in \cite{MS}, the construction of $\cE(\rho)_\bullet$ is presented from a slightly different point of view, closer to that of \cite{Weil_matrix_div}. One starts with a Fuchsian group of finite co-volume $\Ga\subset \PSL(2,\R)$ and consider the compact Riemann surface $X:=(\mathcal{H}\cup\{\mathrm{cusps\ of}\ \Ga\})/\Ga$, where $\mathcal{H}$ is the real hyperbolic plane, acted upon holomorphically by the discrete group $\Ga$. The finite set $S\subset X$ is the projection to $X$ of the fixed points and cusps of $\Ga$ in $\widehat{\mathcal{H}}:=\mathcal{H}\cup\partial\mathcal{H}$. In particular, each point $x\in S$ has an order $1\leq d_x \leq \infty$, which by definition is the order of the cyclic group $\mathrm{Stab}_\Ga(h)$, for all $h$ lying above $x$ in $\widehat{\mathcal{H}}$. So $d_x=\infty$ if and only if $x$ comes from a cusp of $\Ga$ (in $\partial\mathcal{H}$). The relation with the previous point of view is given by the fact that a generator of $\mathrm{Stab}_\Ga(h)$ comes from the homotopy class of a small loop around $x$ in $X\setminus S$. Note that, on the one hand, if $\Ga$ is cocompact, then $X=\mathcal{H}/\Ga$ can be viewed as an orbifold Riemann surface, with $S$ equal to the set of orbifold singularities of $X$. Parabolic vector bundles over $(X,S)$ then correspond to orbifold vector bundles over $X$ (\cite{Boden,Biswas_Duke}). On the other hand, if $\Ga$ is torsion-free of finite co-volume, so has only cusps as fixed points in $\widehat{\mathcal{H}}$, then $\Ga\simeq \pi_1(X\setminus S)$.

\begin{proof}[Proof of Proposition \ref{construction_of_real_parab_vb}]
In view of \cite[Proposition~1.12]{MS} and Theorem \ref{polystability_over_R_and_over_C}, it suffices to proves that the parabolic vector bundle $\cE(\rho)_\bullet\lra X$ has a real (respectively, quaternionic) structure which is compatible with its parabolic structure; it will then automatically be polystable in the real (respectively, quaternionic) parabolic sense. The real (respectively, quaternionic) structure in question comes from the $\Ga(Y,\,\si)$-equivariant structure of the vector bundle $\mathcal{H}\times\C^r$ over $\mathcal{H}$, associated to the representation $\rho\,:\,\Ga(Y,\,\si)\,\lra\,\W(r)_\pm$, which is defined by $\ga\cdot(h,v) := (\ga\cdot h,\rho(\ga)v)$. If $\ga\,\in\,\Ga(Y,\,\si)\setminus\pi_1(Y)$, then $\ga^2$ is, so it descends to an anti-holomorphic transformation $\cE(\rho)=(\mathcal{H}\times\C^r)/\Ga$, covering the real structure of the base. It is then easy to check that this induced transformation squares to $\Id_{\cE(\rho)}$ (respectively, $-\Id_{\cE(\rho)}$) if $\rho$ is a real (respectively, quaternionic) representation, and that it does not depend on the choice of the element $\ga\,\in\,
\Ga(Y,\,\si)\setminus\pi_1(Y)$; see \cite[Section 2]{Sch_JDG} for details. If we now add the fixed points of $\Ga(Y,\,\si)$, then the parabolic structure of $(\widehat{\mathcal{H}}\times\C^r)/\pi_1(Y)$ over $S$ comes, as we have recalled, from the $\mathrm{Stab}_{\pi_1(Y)}(h)$-equivariant structure of $\widehat{\mathcal{H}}\times\C^r$ around $y\in\widehat{\mathcal{H}}$ (\cite[Section 1]{MS}). It therefore suffices to show that this equivariant structure extends to an equivariant structure for the larger stabilizer subgroup $\mathrm{Stab}_{\Ga(Y,\si)}(h)$, which is indeed the case since $\rho$ is, by assumption, a unitary representation of the full $\Ga(Y,\,\si)$.
\end{proof}

\begin{remark}
When $S$ is contained in the fixed point set $X^\si$ (compare \cite[p.~212]{MS}), we can be more explicit in the proof of Proposition \ref{construction_of_real_parab_vb} for real parabolic vector bundles. Indeed, in this case, the parabolic structure on $E_x$ (for all $x\in S$) comes from a representation of the stabilizer group $\mathrm{Stab}_{\pi_1(Y)}(h)\lra\U(r)$ for some chosen $h$ above $x$ in $\widehat{\mathcal{H}}$. Such a representation sends a generator $\ga_y$ of the cyclic group $\mathrm{Stab}_{\pi_1(Y)}(h)$ to the unitary matrix 
$$\begin{pmatrix}
\exp^{2\pi\sqrt{-1} \beta_1} & & 0\\
& \ddots & \\
0 & & \exp^{2\pi\sqrt{-1}\beta_n}
\end{pmatrix}$$ where $(\beta_1,\, \ldots\, ,\beta_n)$ are the (non-necessarily distinct) weights of $\cE(\rho)_\bullet$ at $x$. This representation is $\Z/2\Z$-equivariant with respect to the involution $z\lmt z^{-1}$ on $\mathrm{Stab}_{\pi_1(Y)}(h)$ and complex conjugation on $\U(r)$, so it extends to a group morphism from $\mathrm{Stab}_{\pi_1(Y)}(h)\rtimes \Z/2\Z \simeq \mathrm{Stab}_{\Ga(Y,\si)}(h)$ to $\U(r)\rtimes\Z/2\Z\simeq \W(r)_+$, as claimed.
\end{remark}

We now perform a similar analysis in the compact-type case. This time, we consider the trivial action of $\Z/2\Z$ on the unitary group $\U(r)$. Then we have $H^2(\Z/2\Z;\cZ(\U(r)))=1$, because the Abelian group $\cZ(\U(r))\simeq S^1$ is $2$-divisible (note that this would not happen for $\mathbf{SU}(2m)$ and that this is related to what we were saying in Remark \ref{on_compact_type_quaternionic}, regarding the existence of compact-type quaternionic vector bundles with structure group contained in $\mathbf{SL}(2m;\C)$; see also \cite{Ho}). Therefore, any extension of $\Z/2\Z$ by $\U(r)$ in this case is isomorphic to the direct product $\U(r)\times\Z/2\Z$. In particular, if we look at the analog of Definition \ref{rep_of_fund_ses} in this context, we find that unitary representations of $\Ga(Y,\,\si)$ inducing the identity on $\Z/2\Z$ correspond bijectively to unitary representations of $\Ga(Y,\,\si)$ in the usual sense, i.e.\ group morphisms $\rho\,:\,\Ga(Y,\,\si)\,\lra\,\U(r)$. We then have the following analog of Proposition \ref{construction_of_real_parab_vb}.

\begin{proposition}\label{construction_of_compact_type_real_parab_vb}
Let $\rho\,:\Ga(Y,\,\si)\,\lra\, \U(r)$ be a unitary representation of $\Ga(Y,\,\si)$. Then the parabolic vector bundle $\cE(\rho)_\bullet$ has a canonical compact-type real structure, with respect to which it is polystable.
\end{proposition}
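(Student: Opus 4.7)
The plan is to mimic the proof of Proposition \ref{construction_of_real_parab_vb}, the crucial new feature being that the representation $\rho\,:\,\Ga(Y,\si)\,\lra\,\U(r)$ takes values in $\U(r)$ with \emph{trivial} $\Z/2\Z$-action, so that the $\Ga(Y,\si)$-equivariant structure on $\cH\times\C^r$ given by $\ga\cdot(h,v)=(\ga\cdot h,\rho(\ga)v)$ is $\C$-linear on every fiber, including for $\ga\in\Ga(Y,\si)\setminus\pi_1(Y)$ (contrast with the real or quaternionic cases, where such $\ga$ act on the fibers by conjugate-linear maps). For $\ga\in\Ga(Y,\si)\setminus\pi_1(Y)$, this structure therefore descends to a transformation $\tau\,:\,\cE(\rho)\lra\cE(\rho)$ which covers the anti-holomorphic involution $\si$ of $X$ but is $\C$-linear on fibers; as in \cite[Section~2]{Sch_JDG}, $\tau$ is independent of the choice of $\ga$ and squares to $\Id_{\cE(\rho)}$, no sign being introduced since $H^2(\Z/2\Z;\cZ(\U(r)))$ is trivial for the trivial action.

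The next step is to convert $\tau$ into an isomorphism $\phi\,:\,\cE(\rho)_\bullet\lra(\si^*\ov{\cE(\rho)})^\vee_\bullet$. I would use the Hermitian metric $h$ on $\cE(\rho)$ induced by the standard $\U(r)$-invariant Hermitian form on $\C^r$, and set
\[
\phi(v)(w)\,:=\,h(\tau(v),\,w)\, ,\qquad v\in\cE(\rho)_x,\ w\in\cE(\rho)_{\si(x)}\, .
\]
The map $w\,\lmt\,h(\tau(v),w)$ is conjugate-linear in $w$, hence lies in $\ov{\cE(\rho)}^\vee_{\si(x)}=(\si^*\ov{\cE(\rho)})^\vee_x$, and $v\,\lmt\,\phi(v)$ is $\C$-linear, so $\phi$ is a bundle homomorphism of the expected type. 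Its holomorphy comes from the fact that, upstairs on $\cH\times\C^r$, the Hermitian form and the lift of $\si$ coming from $\ga$ are both constant along the base. The required symmetry $(\si^*\ov{\phi})^\vee\circ\phi=\Id_{\cE(\rho)}$ then reduces, after unwinding the canonical identifications $E\simeq(E^\vee)^\vee$ and $\ov{\ov{E}}\simeq E$, to the two elementary facts $\tau^2=\Id$ and $h(\tau(v),\tau(w))=\ov{h(v,w)}$, both of which follow from the unitarity of $\rho$.

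For the parabolic compatibility at the points of $S$, I would argue exactly as in Proposition \ref{construction_of_real_parab_vb}: the filtration and weights of $\cE(\rho)_\bullet$ at a point $x_j\in S$ are encoded by the $\mathrm{Stab}_{\pi_1(Y)}(h)$-action on $\C^r$ at any lift $h\in\Hhat$ of $x_j$, and because $\rho$ is defined on all of $\Ga(Y,\si)$, this action automatically extends to one of $\mathrm{Stab}_{\Ga(Y,\si)}(h)$, which is what guarantees the compatibility of $\tau$, hence of $\phi$, with the parabolic structure. Polystability of $(\cE(\rho)_\bullet,\phi)$ as a compact-type real parabolic bundle then follows from \cite[Proposition~1.12]{MS}, which yields polystability of $\cE(\rho)_\bullet$ as a parabolic bundle of parabolic degree zero, together with the compact-type half of Theorem \ref{polystability_over_R_and_over_C}. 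The step I expect to be the most delicate is the verification of $(\si^*\ov{\phi})^\vee\circ\phi=\Id$: it requires careful bookkeeping of the canonical interactions between the functors $(\,\cdot\,)^\vee$, $\ov{(\,\cdot\,)}$ and $\si^*$, but once these are properly unpacked, the identity collapses to the two elementary facts mentioned above, which are built into the assumption that $\rho$ is a unitary representation of the full orbifold fundamental group $\Ga(Y,\si)$.
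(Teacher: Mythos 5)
Your proposal is correct and is essentially the paper's own argument, written out more explicitly: the paper packages your formula $\phi(v)(w)=h(\tau(v),w)$ as the observation that $\cH\times\C^r$ descends to a Hermitian vector bundle $\mathcal{V}$ on $Y/\si$ (precisely because $\rho$ is defined on all of $\Ga(Y,\si)$ with values in $\U(r)$ acting $\C$-linearly, which is the same point you make about the descended transformation $\tau$), and that the Hermitian metric gives $\mathcal{V}^\vee\simeq\ov{\mathcal{V}}$, which pulls back along $Y\to Y/\si$ to the compact-type structure on $\mathcal{W}=\cE(\rho)$; the treatment of the parabolic data at $S$ is the same in both. One bookkeeping slip worth fixing: since your $\tau$ is $\C$-linear on the fibers, the identity $h(\tau(v),\tau(w))=\ov{h(v,w)}$ cannot hold (the two sides have opposite linearity types in $v$ and $w$) --- the correct compatibility is that $\tau$ is an isometry, $h(\tau(v),\tau(w))=h(v,w)$, which is exactly what unitarity of the matrices $\rho(\ga)$ gives, and combined with $\tau^2=\Id$ and the Hermitian symmetry of $h$ it does yield the required relation between $\phi$ and $(\si^*\ov{\phi})^\vee$; the conjugated identity you wrote is the one relevant to the real and quaternionic cases, where $\tau$ is antilinear.
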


\begin{proof}
It suffices to prove that there is an isomorphism $\phi:\cE(\rho)_\bullet\lra (\si^*\cE(\rho))_\bullet^\vee$ such that $(\si^*\ov{\phi})^\vee\circ\phi = \Id_{\cE(\rho)}$. This follows from the fact that $\mathcal{H}\times \C^r$ descends to a Hermitian vector bundle $\mathcal{V}$ on the open dianalytic surface $Y/\si=\mathcal{H}/\Ga(Y,\,\si)$, which in turn pulls back to an anti-invariant vector bundle $\mathcal{W}$ on the double cover $Y$ of $Z$ (where by anti-invariant we mean that $\si^*\ov{\mathcal{W}}^\vee\simeq \mathcal{W}$ on $Y$). As $\mathcal{V}^\vee\simeq \ov{\mathcal{V}}$ on $Y/\si$, the isomorphism $\si^*\ov{\mathcal{W}}^\vee\simeq \mathcal{W}$ over $Y$ is actually a compact-type real structure, by construction. Finally, $\mathcal{W}$ is just $\cE(\rho)$, since both come from the $\pi_1(Y)$-equivariant vector bundle $\mathcal{H}\times\C^r$, induced on by $\rho:\Ga(Y,\,\si)\,\lra\, \U(r)$. The compatibility with the parabolic structure is checked in the same way as in Proposition \ref{construction_of_real_parab_vb}.
\end{proof}

\subsection{From polystable parabolic vector bundles to unitary representations}

In this section, we prove the main result of this paper (Theorem \ref{MS_corresp_for_real_and_quat}) and its analog for compact-type real bundles (Theorem \ref{MS_corresp_for_compact_type_real}). The strategy is to construct an inverse to the maps $\rho\lmt\cE(\rho)_\bullet$ constructed in Propositions \ref{construction_of_real_parab_vb} and \ref{construction_of_compact_type_real_parab_vb}, thus proving that every polystable real (respectively, quaternionic) parabolic vector
bundle of parabolic degree zero comes from a real (respectively, quaternionic) representation $\rho
\,:\,\Ga(Y,\,\si)\,\lra\,\W(r)_\pm$, and that every polystable compact-type real parabolic vector bundle of parabolic degree zero comes from a unitary representation in the usual sense $\rho\,:
\,\Ga(Y,\,\si)\,\lra\,\U(r)$. Given a polystable real (respectively, quaternionic, respectively, compact-type real) vector bundle $(E_\bullet\, , \phi)$ of parabolic degree zero, such a map is obtained via the holonomy representation of a certain singular unitary connection on $E_\bullet$, namely the Chern connection $A_h$ of an adapted Hermitian-Yang-Mills metric $h$ on $E_\bullet$ (see \cite[Section 2.1]{Biq}). Note that, if we do not impose any compatibility condition on $A_h$ and $\phi$, then we are exactly in the situation of the Mehta-Seshadri theorem (since $E_\bullet$ is polystable in view of Theorem \ref{polystability_over_R_and_over_C}), so this already gives us a holonomy representation $\rho':\pi_1(Y)\lra \U(r)$ such that $\cE(\rho')_\bullet\simeq E_\bullet$ and all that is left to do is incorporate the real (respectively, quaternionic, respectively, compact-type real) structure $\phi$ and show that, when $A_h$ is compatible with $\phi$ in a sense to be made precise, then $\rho'$ extends to a real or quaternionic representation $\rho\,:\,\Ga(Y,\,\si) \,\lra\, \W(r)_\pm$ (respectively, a unitary representation
$\rho\,:\,\Ga(Y,\,\si) \,\lra\, \U(r)$ in the compact-type case), which is what we deal with next, as it has independent interest (Lemma \ref{ext_of_hol_rep}). To simplify the exposition, we focus in what follows on the real case. Unless explicitly stated otherwise, the quaternionic and compact-type real cases are dealt with similarly.

\begin{lemma}\label{ext_of_hol_rep}
Let $(E_\bullet\, , \phi)$ be a real parabolic vector bundle of parabolic degree zero on the compact Klein surface $(X,\si,S)$. Let $h$ be an adapted Hermitian-Yang-Mills metric on $E_\bullet$ and let $A_h$ be the associated Chern connection. If $A_h$ is compatible with $\phi$ in the sense that the pullback of the singular connection $\si^*A_h$ on $\si^*\ov{E}_\bullet$ under the isomorphism $\phi:E_\bullet \lra\si^*\ov{E}_\bullet$ coincides with $A_h$, then the holonomy representation $\rho':\pi_1(X\setminus S)\lra \U(r)$ extends to a group morphism $\rho:\pi_1^{\mathrm{orb}}((X\setminus S)/\si)\lra \U(r)\rtimes\Z/2\Z$.
\end{lemma}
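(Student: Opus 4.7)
The plan is to realize $\rho$ as the monodromy of $A_h$ on the hyperbolic plane, decorated by the constant antilinear isometry coming from $\phi$. More precisely, I view $\Ga(Y,\,\si)$ as a subgroup of $\PGL(2,\R)$ acting on the (extended) hyperbolic plane $\mathcal{H}$ so that $\mathcal{H}/\Ga(Y,\,\si) = Y/\si$, with its index-two subgroup $\pi_1(Y)$ consisting of the orientation-preserving (holomorphic) elements.

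Since $E_\bullet$ is polystable of parabolic degree zero (by Theorem \ref{polystability_over_R_and_over_C}), the adapted Hermitian-Yang-Mills connection $A_h$ is flat on $Y$, and its pullback $\widetilde{A_h}$ to $\mathcal{H}$ is flat away from the preimages of $S$. Using parallel transport from a basepoint and a chosen unitary frame at it, this yields a $\pi_1(Y)$-equivariant unitary trivialization $\widetilde E \cong \mathcal{H} \times \C^r$ in which $\widetilde{A_h}$ is the product connection, and in which the deck action of $\pi_1(Y) \subset \Ga(Y,\,\si)$ reads $\ga \cdot (h, v) = (\ga h, \rho'(\ga) v)$, recovering the Mehta--Seshadri holonomy representation $\rho'$.

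The real structure $\phi$ is equivalent to an antilinear bundle involution $\tau: E \to E$ covering $\si$ with $\tau^2 = \Id_E$. The compatibility hypothesis translates to $\tau^* A_h = A_h$, so the induced lift $\widetilde\tau: \widetilde E \to \widetilde E$ covering a chosen element $\widetilde{\si} \in \Ga(Y,\,\si) \setminus \pi_1(Y)$ is horizontal. In the flat trivialization it is therefore given by a single antilinear unitary: there exists $u_0 \in \U(r)$ with
$$\widetilde\tau(h,\, v) \,=\, (\widetilde{\si}\, h,\, u_0\, \ov v)$$
for every $h \in \mathcal{H}$. I then set $\rho(\widetilde{\si}) := (u_0, -1) \in \U(r) \rtimes \Z/2\Z$ and extend by $\rho|_{\pi_1(Y)} = \rho'$. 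To check that this defines a group morphism I verify two relations. The conjugation identity $\rho'(\widetilde{\si}\, \ga\, \widetilde{\si}^{-1}) = u_0\, \ov{\rho'(\ga)}\, u_0^{-1}$ follows because $\widetilde\tau$ intertwines the $\ga$-action with the $(\widetilde{\si}\ga\widetilde{\si}^{-1})$-action on $\widetilde E$ in the flat trivialization. The squaring identity comes from the computation $\widetilde\tau^2(h, v) = (\widetilde{\si}^2 h,\, u_0\, \ov{u_0}\, v)$ combined with $\tau^2 = \Id_E$, forcing $\rho'(\widetilde{\si}^2) = u_0 \ov{u_0}$; this is exactly the squaring relation in $\U(r) \rtimes \Z/2\Z$. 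Changing the unitary frame at the basepoint conjugates $(\rho', u_0)$ simultaneously by an element of $\U(r)$ and thus only changes $\rho$ within its equivalence class; changing $\widetilde{\si}$ by an element of $\pi_1(Y)$ multiplies $\rho(\widetilde{\si})$ by a value already determined by $\rho'$. The quaternionic case is identical, except that $\tau^2 = -\Id_E$ produces $\rho'(\widetilde{\si}^2) = -u_0\ov{u_0}$, consistent with the non-split extension $\W(r)_-$; in the compact-type case one replaces $\widetilde\tau$ by the horizontal sesquilinear pairing induced by $\phi$ and lands in $\U(r) \times \Z/2\Z$, in agreement with the vanishing of $H^2$ noted before Proposition \ref{construction_of_compact_type_real_parab_vb}.

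The main technical point I expect will require care is the behaviour at the singularities of $\widetilde{A_h}$ on $\mathcal{H}$, that is, at the elliptic fixed points of $\pi_1(Y)$ lying above $S$ and at the fixed points of $\widetilde{\si}$ lying above the real locus $Y^\si$. Near each such point one cannot use naive parallel transport; instead one must use the local model of Mehta--Seshadri (a local representation of the cyclic stabilizer into $\U(r)$, matched to the prescribed parabolic weights) and show, using the compatibility of $A_h$ with $\phi$, that at fixed points of $\widetilde{\si}$ this local representation extends to $\U(r) \rtimes \Z/2\Z$ compatibly with the global flat structure on the complement of the singular set, reproducing the explicit local picture described in the remark following Proposition \ref{construction_of_real_parab_vb}.
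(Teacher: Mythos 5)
Your proof is correct and takes essentially the same route as the paper: flatness of $A_h$ on $Y$ from the vanishing parabolic degree together with the Hermitian--Yang--Mills condition, followed by equivariance of parallel transport under a lift of $\si$ to produce the constant antilinear unitary $u_0$ and hence the extension to $\U(r)\rtimes\Z/2\Z$ --- the paper simply delegates this second step to \cite[Section 4.1]{Sch_JDG}, whose content you have reconstructed explicitly, including the correct identification of the delicate behaviour at the parabolic points. The only point to watch is that the unitarity of $u_0$ uses that $\phi$ is an isometry for $h$ (invariance of the metric, not merely of the connection), which is indeed the situation in which the lemma is applied.
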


\begin{proof}
Since $E_\bullet$ has parabolic degree $0$ and $h$ is an adapted Hermitian-Yang-Mills metric on $E_\bullet$, the restriction of the Chern connection $A_h$ to $E|_{X\setminus S}$ is flat (this follows for instance from the Gauss-Bonnet formula \cite[Proposition 2.9]{Biq} and the fact that the curvature of the Chern connection of an adapted Hermitian-Yang-Mills metric is constant over $X\setminus S$, by definition of Yang-Mills equations). So we get a holonomy representation $\rho':\pi_1(X\setminus S)\lra\U(r)$. As the connection $A_h$ is compatible with the real structure $\phi$, we can apply the general properties of parallel transport for invariant connections (\cite[Section 4.1]{Sch_JDG}) and show that $\rho'$ extends to a group morphism $\rho:\pi_1^{\mathrm{orb}}((X\setminus S)/\si)\lra \U(r)\rtimes\Z/2\Z$.
\end{proof}

The final step is then to ensure the existence of an invariant adapted Hermitian-Yang-Mills metric on $(E_\bullet\, , \phi)$. Indeed, after that, it is a simple matter to check that the Chern connection of an invariant adapted Hermitian is itself invariant (this follows from the uniqueness of the Chern connection), which puts us in the situation of Lemma \ref{ext_of_hol_rep}. By invariant metric on $(E_\bullet\, , \phi)$, we mean here an adapted Hermitian metric $h$ on $E_\bullet$ such that the isomorphism $\phi:E_\bullet \lra \si^*\ov{E}_\bullet$ becomes an isometry when $\si^*\ov{E}_\bullet$ is equipped with the metric $\si^*\ov{h}$ induced by $h$. Equivalently, we may use $\phi$ to pull back the metric $\si^*\ov{h}$ to $E_\bullet$~: this sets up a $\Z/2\Z$-action on the space of metrics, and an invariant metric is then just a fixed point of that action. Note that this action is induced by the (anti-holomorphic) involution $\si:Y\lra Y$ and the real structure $\phi:E_\bullet\lra \si^*\ov{E}_\bullet$, which may also be seen as a lift $\tau:E_\bullet \lra E_\bullet$ of the real structure $\si:Y\lra Y$. The existence, under the assumption that $(E_\bullet\, , \phi)$ is polystable, of an invariant adapted Hermitian-Yang-Mills metric on $E_\bullet$ then follows from the work of C. Simpson (\cite[Theorem 1 and Corollary 3.6]{Simpson_JAMS_1988} and \cite[Theorem 6]{Simpson_JAMS_1990}). Strictly speaking, in \cite{Simpson_JAMS_1988}, Simpson considers equivariant bundles over $Y$ when the latter is equipped with a finite group of holomorphic transformations, but since anti-holomorphic transformations of $Y$ also act on the set of Hermitian metrics on a complex vector bundle $E$ over $Y$, his proof extends directly to our setting. Other approaches to the existence of adapted Hermitian-Yang-Mills connections on polystable parabolic vector bundles can be found in \cite{Biq,Poritz,Nasatyr_Steer_orbi_approach}, that could presumably also be used to construct invariant Hermitian-Yang-Mills connections on polystable real parabolic vector bundles. Note that one advantage of Simpson's approach is that it does not require using the polystability of the underlying parabolic vector bundle (Theorem \ref{polystability_over_R_and_over_C}), only its polystability in the real sense (also, Simpson considers parabolic Higgs bundles, which are more general than the parabolic vector bundles we study here). We sum up our discussion in the following statement.

\begin{theorem}\label{existence_of_HYM_metrics}
Let $(E_\bullet\, , \phi)$ be a real parabolic vector bundle over the open Klein surface $(Y,\si)$. Then $(E_\bullet\, , \phi)$ is polystable if and only if it admits an invariant, adapted Hermitian-Yang-Mills metric. In particular, if $E_\bullet$ has vanishing parabolic degree, then the Chern connection of such a metric $h$ is flat and invariant. So, by Lemma \ref{ext_of_hol_rep}, its holonomy representation $\rho_h:\pi_1(Y)\lra \U(r)$ extends to a real unitary representation $\widetilde{\rho_h}:\pi_1^{\mathrm{orb}}(Y/\si)\lra \U(r)\rtimes\Z/2\Z$.
\end{theorem}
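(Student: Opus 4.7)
The plan is to establish the equivalence between polystability and the existence of an invariant adapted Hermitian-Yang-Mills metric, and then read off the statement about the holonomy representation from Lemma \ref{ext_of_hol_rep}. The direction from existence of an invariant adapted Hermitian-Yang-Mills metric to polystability is the easy one: any adapted Hermitian-Yang-Mills metric on $E_\bullet$ already forces the underlying parabolic bundle $E_\bullet$ to be polystable, by the parabolic Mehta-Seshadri theorem as refined in \cite{Biq,Simpson_JAMS_1988}, and Theorem \ref{polystability_over_R_and_over_C} then upgrades this to polystability of $(E_\bullet\, ,\phi)$ in the real parabolic sense.

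The substantive direction is the converse: starting from a polystable real parabolic bundle $(E_\bullet\, ,\phi)$, produce an invariant adapted Hermitian-Yang-Mills metric. By Theorem \ref{polystability_over_R_and_over_C} the underlying parabolic bundle $E_\bullet$ is polystable, so at least one adapted Hermitian-Yang-Mills metric $h_0$ exists. The pair $(\si,\phi)$ induces an involution on the space of adapted Hermitian metrics, sending $h$ to $\phi^*(\si^*\ov{h})$, and an invariant metric is a fixed point of this involution. The main obstacle will be producing a fixed point that is itself Hermitian-Yang-Mills. The plan is to invoke Simpson's equivariant existence theorem (\cite[Theorem~1 and Corollary~3.6]{Simpson_JAMS_1988}, \cite[Theorem~6]{Simpson_JAMS_1990}) and argue that, although his framework is formulated for finite groups acting by holomorphic transformations, it extends to the anti-holomorphic setting: the map $h\mapsto \phi^*(\si^*\ov{h})$ is real-analytic and preserves the Donaldson-type functional whose minimizers are the adapted Hermitian-Yang-Mills metrics, so his variational argument can be restricted to the subspace of invariant metrics. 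An alternative, more elementary route is to observe that $\phi^*(\si^*\ov{h_0})$ is a second adapted Hermitian-Yang-Mills metric on $E_\bullet$, so it differs from $h_0$ by an automorphism $g\in\mathrm{Aut}(E_\bullet)$ of the polystable isotypic decomposition; then the induced $\Z/2\Z$-action on the reductive group $\mathrm{Aut}(E_\bullet)$ allows one to extract a square root of $g$ within each isotypic block and adjust $h_0$ accordingly. In either route, what makes the argument work is the polystability of $(E_\bullet\, ,\phi)$ in the real sense, since this is exactly what guarantees that the fixed-point set of the involution is non-empty; the quaternionic and compact-type real cases proceed identically, with the obvious modifications of the involution.

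Once an invariant adapted Hermitian-Yang-Mills metric $h$ is in hand, the rest of the statement follows directly. The uniqueness of the Chern connection forces $A_h$ to be invariant under $\phi$ in the sense required by Lemma \ref{ext_of_hol_rep}. When $E_\bullet$ has parabolic degree zero, the Gauss-Bonnet formula \cite[Proposition~2.9]{Biq} together with the Yang-Mills equation forces $A_h$ to be flat on $X\setminus S$, and Lemma \ref{ext_of_hol_rep} supplies the extension $\widetilde{\rho_h}:\pi_1^{\mathrm{orb}}(Y/\si) \lra \U(r)\rtimes \Z/2\Z$ of the holonomy representation, completing the proof.
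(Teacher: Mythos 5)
Your primary route is essentially the paper's own: the substantive step is the existence of an invariant adapted Hermitian--Yang--Mills metric on a polystable $(E_\bullet\,,\phi)$, obtained by extending Simpson's equivariant existence theorem (\cite[Theorem~1 and Corollary~3.6]{Simpson_JAMS_1988}, \cite[Theorem~6]{Simpson_JAMS_1990}) from finite groups of holomorphic automorphisms to the anti-holomorphic involution $h\mapsto\phi^*(\si^*\ov{h})$, after which the uniqueness of the Chern connection, the Gauss--Bonnet formula of \cite[Proposition~2.9]{Biq} and Lemma~\ref{ext_of_hol_rep} give the rest, exactly as in the discussion preceding the theorem in the paper. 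Your alternative route --- start from any adapted Hermitian--Yang--Mills metric $h_0$, write $\phi^*(\si^*\ov{h_0})=h_0\cdot g$ for a parallel positive self-adjoint automorphism $g$, and correct $h_0$ by a square root of $g$ (equivalently, take the midpoint of the geodesic joining the two metrics in the nonpositively curved space of Hermitian--Yang--Mills metrics) --- is a genuine variant and is workable, but two remarks are in order. First, it consumes Theorem~\ref{polystability_over_R_and_over_C} in order to produce $h_0$ at all, whereas the paper notes that Simpson's argument only needs polystability in the real sense; this is precisely the trade-off between the two routes. Second, your assertion that real polystability is \emph{what guarantees that the fixed-point set of the involution is non-empty} is slightly misplaced for that route: once some adapted Hermitian--Yang--Mills metric exists, the fixed point comes for free from the Cartan fixed-point (or square-root) argument applied to the isometric involution; what would still need to be verified explicitly is that $h\mapsto\phi^*(\si^*\ov{h})$ is indeed an involution (using $\si^*\ov{\phi}\circ\phi=\pm\Id_E$) and that $g$ is parallel and positive, so that its canonical square root exists and yields an invariant metric. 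Neither point is a gap in your main argument, which matches the paper.
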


We have thus proved Theorem \ref{MS_corresp_for_real_and_quat}, as well as the following result for compact-type real parabolic vector bundles.

\begin{theorem}\label{MS_corresp_for_compact_type_real}
Let $(X,\si)$ be a compact Klein surface and let $S\subset X$ be a finite subset such that $\si(S)=S$. Let $Y$ be the punctured surface $X\setminus S$, assumed to be of negative Euler characteristic, and let $\Ga(Y,\,\si)$ be the orbifold fundamental group of the quotient surface $Y/\si$. Then there is a bijective correspondence between conjugacy classes of unitary representations of $\Ga(Y,\,\si)$ and isomorphism classes of polystable compact-type parabolic vector bundles of parabolic degree zero on $(X,\si)$ with parabolic structure at the points of $S$.
\end{theorem}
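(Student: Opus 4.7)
The plan is to mirror the strategy used for Theorem \ref{MS_corresp_for_real_and_quat}, adapting each ingredient to the compact-type setting. The easy direction has already been established in Proposition \ref{construction_of_compact_type_real_parab_vb}, which sends a unitary representation $\rho:\Ga(Y,\si)\lra\U(r)$ to the polystable compact-type real parabolic vector bundle $\cE(\rho)_\bullet$ of parabolic degree zero. The goal is therefore to construct an explicit inverse to this correspondence.

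First, given a polystable compact-type real parabolic vector bundle $(E_\bullet,\phi)$ of parabolic degree zero on $(X,\si,S)$, I would invoke the second half of Theorem \ref{polystability_over_R_and_over_C} to deduce that the underlying parabolic vector bundle $E_\bullet$ is polystable in the usual complex sense. Next, I would use the compact-type analog of Theorem \ref{existence_of_HYM_metrics}: the structure $\phi:E_\bullet\lra(\si^*\ov{E})_\bullet^\vee$ induces an involution on the space of adapted Hermitian metrics on $E_\bullet$ (a Hermitian metric being viewed as a $C^\infty$ isomorphism $E\lra \ov{E}^\vee$, the involution sends $h$ to $(\si^*\ov{\phi})^\vee\circ\si^*\ov{h}\circ\phi$, whose fixed points correspond to metrics making $\phi$ compatible with $\si^*\ov{h}$), and Simpson's equivariant existence theorem produces an invariant adapted Hermitian-Yang-Mills metric $h$ exactly as in the discussion preceding Theorem \ref{existence_of_HYM_metrics}.

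Then, by uniqueness of the Chern connection, the connection $A_h$ is itself invariant with respect to $\phi$; since $E_\bullet$ has vanishing parabolic degree, $A_h$ is moreover flat on $Y=X\setminus S$. I would then apply the compact-type analog of Lemma \ref{ext_of_hol_rep}: the holonomy representation $\rho':\pi_1(Y)\lra \U(r)$ extends to a group morphism $\rho:\Ga(Y,\si)\lra\U(r)$. The crucial point, distinguishing this case from the real and quaternionic ones, is that the compatibility between $A_h$ and $\phi$ is expressed via the dual of $\si^*\ov{E}_\bullet$ rather than via $\si^*\ov{E}_\bullet$ itself, so that parallel transport around orientation-reversing loops produces unitary transformations whose interaction with those coming from $\pi_1(Y)$ is via the trivial extension $\U(r)\times\Z/2\Z$, not the semi-direct product $\U(r)\rtimes\Z/2\Z$. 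Concretely, this can be seen by descending $(E_\bullet,\phi,h)$ to a Hermitian parabolic bundle $\mathcal{V}$ on the quotient Klein surface $Y/\si$ (reversing the descent performed in Proposition \ref{construction_of_compact_type_real_parab_vb}) and taking the holonomy of the Chern connection of $\mathcal{V}$, which is a genuine $\U(r)$-valued representation of $\pi_1^{\mathrm{orb}}(Y/\si)=\Ga(Y,\si)$. Finally, one checks that $\cE(\rho)_\bullet\simeq(E_\bullet,\phi)$ as compact-type real parabolic bundles, and that equivalent representations produce isomorphic bundles; this closes the bijection.

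The main obstacle is verifying that Simpson's existence theorem for invariant Hermitian-Yang-Mills metrics goes through for the involution induced by a compact-type real structure, since that involution acts on metrics by an expression involving both $\si^*$ and the duality pairing, rather than by the simpler pullback-plus-conjugation appearing in the real case. Once this is established, however, the flatness of $A_h$, the extension of the holonomy to $\Ga(Y,\si)$, and the inversion of $\rho\lmt\cE(\rho)_\bullet$ all proceed exactly as in the proof of Theorem \ref{MS_corresp_for_real_and_quat}.
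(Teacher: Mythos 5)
Your proposal follows essentially the same route as the paper, which handles the compact-type case by running the same chain of arguments (polystability of the underlying bundle via Theorem \ref{polystability_over_R_and_over_C}, Simpson's equivariant existence theorem for an invariant adapted Hermitian--Yang--Mills metric, invariance and flatness of the Chern connection, and the extension of the holonomy as in Lemma \ref{ext_of_hol_rep}) with the trivial $\Z/2\Z$-action on $\U(r)$ replacing conjugation, so that the target extension is the direct product and the representation is simply $\U(r)$-valued. Your explicit description of the induced involution on the space of metrics and the descent to $Y/\si$ is a correct elaboration of what the paper leaves implicit.
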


Finally, the following result follows directly from Theorem \ref{existence_of_HYM_metrics} and the notion of geometrically stable real or quaternionic (respectively, compact-type real) parabolic vector bundle (see Corollary \ref{geom_stable_bdles}).

\begin{theorem}
Geometrically stable real and quaternionic parabolic vector bundles on $(X,\si,S)$ correspond bijectively to real and quaternionic unitary representations $\rho:\pi_1^{\mathrm{orb}}((X\setminus S)/\si) \lra \W(r)_\pm$ such that $\rho|_{\pi_1(X\setminus S)}: \pi_1(X\setminus S) \lra \U(r)$ is irreducible.

Geometrically stable compact-type real parabolic vector bundles on $(X,\si,S)$ correspond bijectively to unitary representations $\rho:\pi_1^{\mathrm{orb}}((X\setminus S)/\si) \lra \U(r)$ such that $\rho|_{\pi_1(X\setminus S)}: \pi_1(X\setminus S) \lra \U(r)$ is irreducible.
\end{theorem}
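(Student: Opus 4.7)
The plan is to deduce this statement directly from Theorem \ref{MS_corresp_for_real_and_quat} (respectively Theorem \ref{MS_corresp_for_compact_type_real}), combined with the classical Mehta--Seshadri criterion for stability of the underlying parabolic vector bundle and the definition of geometric stability given in Corollary \ref{geom_stable_bdles}. More precisely, under the bijection between equivalence classes of real (respectively, quaternionic, respectively, unitary) representations of $\Ga(Y,\si)$ and isomorphism classes of polystable real (respectively, quaternionic, respectively, compact-type real) parabolic bundles of parabolic degree zero, it suffices to identify which representations correspond to bundles that are in addition stable as ordinary parabolic vector bundles.

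First, I would unpack the definition of geometric stability: by Corollary \ref{geom_stable_bdles}, a real or quaternionic parabolic bundle $(E_\bullet,\phi)$ is geometrically stable precisely when $(E_\bullet,\phi)$ is stable in the real (respectively, quaternionic) sense \emph{and} the underlying $E_\bullet$ is stable as a parabolic bundle. Note that stability of the underlying $E_\bullet$ automatically implies stability of $(E_\bullet,\phi)$, because the stability condition in Definition \ref{stability_of_real_or_quat_parab_vb} is tested only against the smaller class of $\phi$-invariant subbundles. Hence geometric stability is equivalent to stability of $E_\bullet$ alone. The same reduction works verbatim in the compact-type real case.

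Next, I would invoke the Mehta--Seshadri theorem in the form recalled just before Proposition \ref{construction_of_real_parab_vb}: for the parabolic bundle $\cE(\rho)_\bullet$ associated to a representation with $\rho|_{\pi_1(Y)}:\pi_1(Y)\lra\U(r)$, stability of $\cE(\rho)_\bullet$ is equivalent to irreducibility of $\rho|_{\pi_1(Y)}$. Combining this with the previous paragraph, a polystable real or quaternionic parabolic bundle $(E_\bullet,\phi)$ of parabolic degree zero is geometrically stable if and only if the representation $\rho:\Ga(Y,\si)\lra\W(r)_\pm$ produced by Theorem \ref{MS_corresp_for_real_and_quat} satisfies the irreducibility condition on $\rho|_{\pi_1(Y)}$. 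The compact-type case is identical, using Theorem \ref{MS_corresp_for_compact_type_real} and representations $\rho:\Ga(Y,\si)\lra\U(r)$.

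Finally, I would verify the converse direction: given an irreducible representation $\rho|_{\pi_1(Y)}$ arising from a real, quaternionic, or unitary representation of $\Ga(Y,\si)$, the bundle $\cE(\rho)_\bullet$ is stable as a parabolic vector bundle by Mehta--Seshadri, hence in particular stable (and simple) in the real, quaternionic, or compact-type real sense, and carries the canonical compatible structure constructed in Proposition \ref{construction_of_real_parab_vb} or \ref{construction_of_compact_type_real_parab_vb}. I do not expect any serious obstacle here: the content of the statement is essentially bookkeeping on top of Theorems \ref{MS_corresp_for_real_and_quat} and \ref{MS_corresp_for_compact_type_real}. The only subtle point is the easy observation that $\phi$-invariance shrinks the class of test subbundles, so that stability of the underlying $E_\bullet$ is the strictly stronger condition and thus matches exactly the geometric stability defined in Corollary \ref{geom_stable_bdles}.
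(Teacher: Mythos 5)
Your proposal is correct and follows essentially the same route as the paper, which simply deduces the theorem from the established correspondence (Theorems \ref{MS_corresp_for_real_and_quat}, \ref{MS_corresp_for_compact_type_real} and \ref{existence_of_HYM_metrics}), the definition of geometric stability in Corollary \ref{geom_stable_bdles}, and the Mehta--Seshadri equivalence between stability of $\cE(\rho)_\bullet$ and irreducibility of $\rho|_{\pi_1(Y)}$. Your additional observation that stability of the underlying parabolic bundle already implies stability in the real, quaternionic, or compact-type sense (since fewer subbundles are tested) is the right bookkeeping point and is consistent with the paper.
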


\section*{Acknowledgments}

The authors wish to thank the referee for helpful comments.
I.~Biswas is supported by a J. C. Bose Fellowship. F.~Schaffhauser was supported
by \textit{Convocatoria 2018-2019 de la Facultad de Ciencias 
(Uniandes), Programa de investigaci\'on “Geometr\'ia y Topolog\'ia de los Espacios de 
M\'odulos”}, the \textit{European Union’s Horizon 2020 research and innovation programme under 
grant agreement No 795222} and the \textit{University of Strasbourg Institute of Advanced Study 
(USIAS)}, and U.S. National Science Foundation grants DMS 1107452, 1107263, 1107367 “RNMS: 
Geometric structures And Representation varieties” (the GEAR Network).

\end{document}